\theoremstyle{plain}
\newtheorem*{theorem*}{Theorem}
\newtheorem{theorem}{Theorem}[section]
\newtheorem{lemma}[theorem]{Lemma}
\theoremstyle{definition}
\newtheorem{corollary}{Corollary}
\theoremstyle{remark}
\newtheorem*{remark*}{Remark}
\numberwithin{equation}{section}
\begin{document}


\title[(Semi-)Global Analytic Hypoellipticity]
{(Semi-)Global Analytic Hypoellipticity for a 
class of ``sums of squares" which fail to be locally analytic hypoelliptic}

\author{Gregorio Chinni}
\address{Fakult\"at f\"ur Mathematik, Oskar--Morgenstern--Platz 1, 1090 Vienna, Austria}
\email{gregorio.chinni@gmail.com}
\thanks{The author is supported by the Austrian Science Fund (FWF),
Lise-Meitner position, project no. M2324-N35.}

\subjclass[2010]{35H10, 35H20, 35B65,35A27.}

\date{\today}

\keywords{Sums of squares, Global, Semi-global analytic hypoellipticity}

\begin{abstract}
The global and semi-global analytic hypoellipticity
on the torus is proved for two classes of sums of squares operators, introduced in
\cite{ABM-TrC1-2016} and \cite{BM-TrC2-2016}, satisfying the H\"ormander condition and which fail 
to be neither locally nor microlocally analytic hypoelliptic.
\end{abstract}

\maketitle

\section{Introduction}
\noindent
Our aim, in this work, is to prove global and semi-global, i.e. local in some variables and global in others, 
analytic hypoellipticity on the torus for some models of sums of squares of vector fields with real valued   
and real analytic coefficients which satisfy H\"ormander condition, \cite{H67}.\\
In two recent papers, \cite{ABM-TrC1-2016} and \cite{BM-TrC2-2016}, Albano, Bove and Mughetti 
and  Bove and Mughetti produced and studied the first models of sums of squares operators not 
consistent with the (micro-)local Treves conjecture, \cite{Treves_Cj-2006}.
They showed that the sufficient part of the Treves' conjecture, for details on the subjet see 
\cite{Treves_Cj-2006}, does not hold neither locally nor microlocally. More precisely,
in \cite{ABM-TrC1-2016} the authors studied the model
\begin{equation}\label{Op-ABM}
P_{_{\!\!ABM}}(x,D)=
D_{1}^{2}\!+\!D_{2}^{2}\!+\!x_{1}^{2(r-1)}\!\left( D_{3}^{2}\!+\! D_{4}^{2}\right)+
x_{2}^{2(p-1)}D_{3}^{2}+x_{2}^{2(q-1)}D_{4}^{2}\, ,
\end{equation}
on $\Omega$, open neighborhood of the origin in $\mathbb{R}^{4}$, where $r$, $p$ and $q$ are 
positive integers such that $1 < r < p < q$. 
They showed that even if $P_{_{\!\!ABM}}$ has a single symplectic stratum, 
in meaning of the Poisson-Treves stratification, it is Gevrey hypoelliptic of order 
$s = r(q-1)[ q-1 +(r-1)(p-1)]^{-1}$ and not better.\\ 
In \cite{BM-TrC2-2016} the authors investigated the following operator
\begin{equation}\label{Op-BM}
P_{_{\!\!BM}}(x,D)\!=\!
D_{1}^{2}+ x_{1}^{2(r+\ell-1)}\left( D_{3}^{2}+D_{4}^{2}\right)
+x_{1}^{2\ell} \left( D_{2}^{2}+x_{2}^{2(p-1)}D_{3}^{2} + x_{2}^{2(q-1)}D_{4}^{2}\right)\!,
\end{equation}
on $\Omega$, open neighborhood of the origin in $\mathbb{R}^{4}$, with $1 < r < p < q$.
They proof that even if the codimention of the characteristic manifold of $P_{_{\!\!BM}}$
is 2 and the related stratification, in the sense of Treves, is made up by two symplectic strata the 
operator is not analytic hypoelliptic. It is Gevrey hypoelliptic of order 
$s = (\ell+r)(q-1)[(q-1)(\ell +1) +(r-1)(p-1)]^{-1}$ and not better.\\
Our purpose will be analyze the global and the semi-global analytic regularity on the four dimensional 
torus for two classes of operators which include as particular cases the global version of the operators
$P_{_{\!\!ABM}}$ and $P_{_{\!\!BM}}$.\\ 
Statement of the results.
%
\begin{theorem}\label{T1-G/L}
Let $P_{1}(x,D)= \sum_{j=1}^{6}X_{j}^{2}(x,D)$ be the operator given by
\begin{equation}\label{G/L-ABM}
D_{1}^{2}+D_{2}^{2}+ a^{2}(x_{1})\left( D_{3}^{2}+ D_{4}^{2}\right)
+b_{1}^{2}(x_{2})D_{3}^{2}+b_{2}^{2}(x_{2}) D_{4}^{2}
\end{equation}
on $ \mathbb{T}^{4}$ where $ a$, $b_{1}$ and $b_{2}$ are  real value real analytic functions not 
identically zero.
Then given any sub-interval 
$\mathcal{I} \subset \mathbb{T}^{2}_{x'}$, $x'=(x_{1},x_{2})$,
and given any
$u$ in $\mathcal{D}'(\mathcal{I} \times \mathbb{T}_{x''}^{2})$, $x''=(x_{3},x_{4})$, the condition 
$P_{1}u \in C^{\omega}(\mathcal{I} \times \mathbb{T}_{x''}^{2})$ implies
$u \in C^{\omega}(\mathcal{I} \times \mathbb{T}_{x''}^{2})$.
\end{theorem}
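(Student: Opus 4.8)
The plan is to use the fact that the coefficients of $P_1$ depend only on $x'=(x_1,x_2)$ and that the vector fields $X_1,\dots,X_6$ involve $D_{x''}=(D_3,D_4)$ only through coefficients constant in $x''$; expanding in Fourier series in $x''$ therefore decouples the equation. Writing
\[
u(x',x'')=\sum_{\xi''\in\mathbb{Z}^2}u_{\xi''}(x')\,e^{i\,x''\cdot\xi''},\qquad P_1u=\sum_{\xi''\in\mathbb{Z}^2}f_{\xi''}(x')\,e^{i\,x''\cdot\xi''},
\]
one is reduced to the family of operators
\[
P_{\xi''}(x',D_{x'})=D_1^2+D_2^2+q(x',\xi''),\qquad q(x',\xi'')=\bigl(a(x_1)^2+b_1(x_2)^2\bigr)\xi_3^2+\bigl(a(x_1)^2+b_2(x_2)^2\bigr)\xi_4^2\ge 0,
\]
with $P_{\xi''}u_{\xi''}=f_{\xi''}$ on $\mathcal{I}$. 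Since $v\in C^{\omega}(\mathcal{I}\times\mathbb{T}^2_{x''})$ is equivalent to the statement that for every compact $K\subset\mathcal{I}$ there are $C,\delta>0$ with $\|\partial_{x'}^{\alpha}v_{\xi''}\|_{L^2(K)}\le C^{|\alpha|+1}\alpha!\,e^{-\delta|\xi''|}$ for all $\alpha$ and all $\xi''$, the theorem amounts to transferring such a bound from $f_{\xi''}$ to $u_{\xi''}$ with constants (possibly worse, but) independent of $\xi''$.

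The case $\xi''=0$ is immediate, $P_{0}=D_1^2+D_2^2$ being real-analytic elliptic. For $\xi''\ne0$ set $n=|\xi''|$. Each $P_{\xi''}$ is elliptic of order two in $x'$, with principal symbol $\xi_1^2+\xi_2^2$, so each $u_{\xi''}$ is real analytic on $\mathcal{I}$; the whole difficulty is uniformity in $\xi''$, the zeroth order term $q(\cdot,\xi'')$ being of size $\sim n^2$. Here the hypotheses on $a,b_1,b_2$ are used: real analytic and not identically zero, they have on $\mathbb{T}$ only finitely many zeros, each of finite order, hence $q(\cdot,\xi'')$ vanishes only on a finite set $\Sigma_{\xi''}$, contained in a fixed finite subset of $\mathbb{T}^2_{x'}$ independent of $\xi''$ (namely in $\{a=0\}\times(\{b_1=0\}\cap\{b_2=0\})$ when $\xi_3\xi_4\ne0$, and in the obvious analogues otherwise). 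On any fixed neighbourhood of a point of $\mathbb{T}^2_{x'}$ disjoint from that finite set one has $q(x',\xi'')\ge c\,n^2$ with $c>0$, so $P_{\xi''}$ is coercive with the large parameter and the required uniform analytic bounds are elementary: in the analytic elliptic estimate the powers of $n$ produced by differentiating $q(\cdot,\xi'')$ are absorbed by the $n^2$-coercivity. The core of the problem is a neighbourhood of a point of $\Sigma_{\xi''}$, where several of $a(x_1),b_1(x_2),b_2(x_2)$ vanish simultaneously.

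Near such a point I would first prove a maximal, subelliptic a priori estimate with the parameter $n$: for $v$ supported in a fixed small box around the point, uniformly in $\xi''$,
\[
\|D_{x'}^2v\|^2+\|n\,a(x_1)v\|^2+\|\xi_3\,b_1(x_2)v\|^2+\|\xi_4\,b_2(x_2)v\|^2+\|n^{\theta}v\|^2\ \le\ C\bigl(\|P_{\xi''}v\|^2+\|v\|^2\bigr),
\]
with $\theta>0$ governed by the H\"ormander bracket condition through the finite vanishing orders of $a,b_1,b_2$; the proof is by integration by parts, moving $D_{x'}$ across the degenerate coefficients. From this one obtains the analytic bounds on $u_{\xi''}$ by the by now classical iteration: apply the estimate to $\partial_{x'}^{\alpha}(\chi_N u_{\xi''})$ along a sequence of nested cut-offs $\chi_0\prec\chi_1\prec\cdots$ exhausting a neighbourhood of $K$, commute $P_{\xi''}$ past the derivatives and the cut-offs, and control the commutators — each differentiation of the analytic functions $a^2,b_1^2,b_2^2$ costs at most one power of $n$, precisely the weight furnished by the terms $\|n\,a(x_1)v\|$, $\|\xi_3 b_1(x_2)v\|$, $\|\xi_4 b_2(x_2)v\|$, and where the vanishing of the coefficients makes these insufficient the $\|n^{\theta}v\|$-gain combined with additional $D_{x'}$-differentiation takes over. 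One is thus led to $\|\partial_{x'}^{\alpha}u_{\xi''}\|_{L^2(K)}\le C^{|\alpha|+1}\alpha!\,n^{|\alpha|}e^{-\delta n}$, which, by the elementary inequality $n^{|\alpha|}e^{-\delta n}\le (2/\delta)^{|\alpha|}\alpha!\,e^{-\delta n/2}$, is the same as a bound $C^{|\alpha|+1}\alpha!\,e^{-\delta' n}$ uniform in $\xi''$; summing the Fourier series and using $\sum_{\xi''\in\mathbb{Z}^2}|\xi''|^{m}e^{-\delta'|\xi''|}\le C_0^{m}m!$ then gives $u\in C^{\omega}(\mathcal{I}\times\mathbb{T}^2_{x''})$.

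The main obstacle is exactly the uniformity in $\xi''$ of the a priori estimate near $\Sigma_{\xi''}$, and — above all — the requirement that the iteration close at the \emph{analytic} level, not merely at Gevrey order $s$: in $\mathbb{R}^4$ these operators are only Gevrey-$s$ hypoelliptic and not analytic hypoelliptic, by \cite{ABM-TrC1-2016}, so the improvement cannot come from any local estimate and must be read off from the \emph{global} structure in $x''$. The mechanism is that the singular, exactly-Gevrey-$s$ solutions responsible for the local failure are not periodic in $x''$ and so are absent on $\mathbb{T}^4$; in the reduced picture this is reflected in the fact that the bottom of the spectrum of the $x'$-operator $P_{\xi''}$ (on $\mathbb{T}^2_{x'}$, or on a box containing $\Sigma_{\xi''}$ with Dirichlet conditions) grows like a positive power of $|\xi''|$ rather than being exponentially small, which is precisely what makes the $n$-power losses incurred in the iteration harmless against the exponential factor $e^{-\delta n}$ carried by $f_{\xi''}$. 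Pinning this down quantitatively at the most degenerate points, where the zeros of $a$ interact with those of $b_1$ and $b_2$, is the technical heart of the argument; the treatment would be organised along the case distinction according to which of $\xi_3,\xi_4$ are nonzero, the two nontrivial regimes being modelled on $P_{_{\!\!ABM}}$ of \cite{ABM-TrC1-2016}.
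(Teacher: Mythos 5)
Your overall strategy—expand in a Fourier series in $x''$ and reduce to a uniform-in-$\xi''$ estimate for the family $P_{\xi''}=D_1^2+D_2^2+q(x',\xi'')$ on $\mathcal{I}$—is exactly the frame the paper uses for this theorem, and you correctly identify the essential point (the a priori gain must not degenerate as $|\xi''|\to\infty$, which is what the compactness in $x''$ buys). But the route you take after that reduction is not the one in the paper, and it is considerably heavier than necessary.

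The paper's proof is the Cordaro--Himonas argument, essentially unchanged: pair the Fourier-transformed equation with $\overline{\widehat u}(\cdot,\xi'')$, integrate over $\mathcal{I}$, and obtain an energy identity whose left-hand side is exactly the weighted norm
\[
\|\widehat u(\cdot,\xi'')\|_g^2 := \int_{\mathcal{I}} g^2\,|\widehat u|^2\,dx' + \|\partial_1\widehat u\|^2 + \|\partial_2\widehat u\|^2,\qquad g^2=a^2|\xi''|^2+b_1^2\xi_3^2+b_2^2\xi_4^2,
\]
with the right-hand side (boundary terms plus $\langle\widehat{P_1u},\widehat u\rangle$) bounded by $Ce^{-\varepsilon|\xi''|}$ using only the analyticity of $P_1u$ and ellipticity of $P_1$ away from the characteristic set. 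The only lemma needed is a two-variable weighted Poincar\'e inequality $\|f\|_0^2\le C\|f\|_g^2$, whose constant depends on a lower bound for $g$ on a fixed annulus away from the origin, hence is uniform (actually improves) in $\xi''$. That gives $\|\widehat u(\cdot,\xi'')\|_0\le Ce^{-\varepsilon|\xi''|}$ at one stroke, and the theorem follows by a standard microlocal argument ($WF_a(u)$ is disjoint from the cone $|\xi'|<c|\xi''|$, hence from $\mathrm{Char}(P_1)$, hence empty by analytic elliptic regularity). No parameter-dependent subelliptic estimate, no Ehrenpreis--H\"ormander cut-offs, and no commutator iteration are used for this theorem; those tools appear in the paper only in the proofs of the semi-global Theorems \ref{T1.1-G/L} and \ref{T2-Gl+G/L}, where one really must localize in some of the $x''$-variables.

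Two concrete problems with your proposal as written. First, it is a plan rather than a proof: the maximal subelliptic estimate with parameter $n$ is asserted, not proved, and the Tartakoff-style iteration is described in outline with the hardest case (simultaneous vanishing of $a$, $b_1$, $b_2$) explicitly left open. Second, the final bookkeeping step is wrong as stated: from $\|\partial_{x'}^{\alpha}u_{\xi''}\|\le C^{|\alpha|+1}\alpha!\,n^{|\alpha|}e^{-\delta n}$ and the inequality $n^{|\alpha|}e^{-\delta n}\le (2/\delta)^{|\alpha|}|\alpha|!\,e^{-\delta n/2}$ you obtain a bound of size $C^{|\alpha|+1}(\alpha!)^2e^{-\delta n/2}$, which is a Gevrey-$2$ estimate in $x'$, not an analytic one; the claim that this ``is the same as'' $C^{|\alpha|+1}\alpha!\,e^{-\delta' n}$ is false, since $\sup_n n^m e^{-(\delta-\delta')n}\sim m!/(\delta-\delta')^m$ cannot be absorbed geometrically. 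If you want the iteration route to close at the analytic level, the induction hypothesis must be chosen more carefully (for instance of the form $C^{|\alpha|+1}\max(\alpha!,\,n^{|\alpha|})e^{-\delta n}$, or with the cut-off order coupled to the derivative count in the Tartakoff manner so that no free factor $\alpha!$ multiplies $n^{|\alpha|}$). In any case, for Theorem \ref{T1-G/L} the weighted Poincar\'e inequality makes all of this unnecessary.
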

\vspace*{0.1em}
\begin{theorem}\label{T1.1-G/L}
Let $P_{1}(x,D)$ be as in (\ref{G/L-ABM}). Assume that $ a$, $b_{1}$ and $b_{2}$ are $0$ at zero
and the zero order at $x_{2}=0$ of $b_{2}$ is strictly greater than that of $b_{1}$.
Let $\mathcal{I}$ an open neighborhoods of $(x_{1},x_{2})=(0,0)$ and $\mathcal{U}$ a sub-interval
of $\mathbb{T}^{1}_{x_{4}}$.
Then if $P_{1}u=f$, with $f$ real analytic on 
$\mathcal{I}\times \mathbb{T}^{1}_{x_{3}}\times \mathcal{U}$ then $u$ is
analytic on $\mathcal{I}\times \mathbb{T}^{1}_{x_{3}}\times \mathcal{U}$.
\end{theorem}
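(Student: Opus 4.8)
The plan is to reduce, via Fourier series in the single global variable $x_{3}$, to a one–parameter family of operators in the remaining three variables $(x_{1},x_{2},x_{4})$, which are now genuinely local, and then to prove analytic estimates for that family which are uniform in the dual parameter and carry an exponential weight.

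First I would dispose of the soft regularity. Since $a,b_{1},b_{2}$ are real analytic and not identically zero, the fields $X_{1},\dots,X_{6}$ satisfy H\"ormander's bracket condition, so $P_{1}$ is $C^{\infty}$–hypoelliptic and, by the Gevrey hypoellipticity already known for these models, $u\in G^{s}(\mathcal I\times\mathbb T^{1}_{x_{3}}\times\mathcal U)$; in particular $u$ is a genuine smooth function there and may be expanded in Fourier series in $x_{3}$. Writing $u=\sum_{k\in\mathbb Z}u_{k}(x_{1},x_{2},x_{4})e^{ikx_{3}}$ and $f=\sum_{k}f_{k}e^{ikx_{3}}$, the equation becomes $P_{1}^{(k)}u_{k}=f_{k}$ on $\mathcal I\times\mathcal U$, where
\[
P_{1}^{(k)}=D_{1}^{2}+D_{2}^{2}+\bigl(a^{2}(x_{1})+b_{2}^{2}(x_{2})\bigr)D_{4}^{2}+k^{2}\bigl(a^{2}(x_{1})+b_{1}^{2}(x_{2})\bigr).
\]
Analyticity of $f$ on $\mathcal I\times\mathbb T^{1}_{x_{3}}\times\mathcal U$ is equivalent to bounds $\sup_{K}|D^{\alpha}_{(x_{1},x_{2},x_{4})}f_{k}|\le C^{|\alpha|+1}|\alpha|!\,e^{-\delta_{0}|k|}$ on compact $K$, and the conclusion follows as soon as we prove the same bounds for the $u_{k}$ (with a smaller $\delta$): since $D_{x_{3}}^{\alpha_{3}}e^{ikx_{3}}=k^{\alpha_{3}}e^{ikx_{3}}$, one gets $\sum_{k}\langle k\rangle^{\alpha_{3}}C^{|\alpha|+1}|\alpha|!\,e^{-\delta|k|}\le C_{1}^{|\alpha|+1}|\alpha|!$. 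So everything reduces to showing that $P_{1}^{(k)}$ is analytically hypoelliptic on $\mathcal I\times\mathcal U$ with analyticity radius and constants \emph{independent of $k$} and with the factor $e^{-\delta|k|}$ preserved.

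The family $\{P_{1}^{(k)}\}$ splits naturally, in the spirit of the analysis behind Theorem \ref{T1-G/L} (there one Fourier–expands in both $x_{3}$ and $x_{4}$; here only in $x_{3}$). For $k=0$ the operator is $D_{1}^{2}+D_{2}^{2}+(a^{2}(x_{1})+b_{2}^{2}(x_{2}))D_{4}^{2}$, the sum of two Grushin–type operators sharing the variable $x_{4}$; its characteristic manifold $\{x_{1}=x_{2}=0,\ \xi_{1}=\xi_{2}=0\}$ is symplectic and the two natural scales (in $x_{1}$ and in $x_{2}$) are both dictated by the \emph{same} frequency $\xi_{4}$, so it is analytically hypoelliptic — this is precisely the feature that $P_{_{\!\!ABM}}$ destroys, since its extra term $b_{1}^{2}(x_{2})D_{3}^{2}$ ties the $x_{2}$–scale to $\xi_{3}$ and produces the incompatible two–scale behaviour. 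For $k\ne0$ the nonnegative term $k^{2}(a^{2}(x_{1})+b_{1}^{2}(x_{2}))$ is a potential which, away from $(x_{1},x_{2})=(0,0)$, makes $P_{1}^{(k)}$ elliptic in $(x_{1},x_{2})$ with a gain of $k^{2}$, giving the $e^{-\delta|k|}$–decay of $u_{k}$ directly. Near $(x_{1},x_{2})=(0,0)$ the hypothesis on the vanishing orders enters: since the order of $b_{2}$ at $x_{2}=0$ strictly exceeds that of $b_{1}$, one has $b_{2}^{2}(x_{2})\le c(x_{2})\,b_{1}^{2}(x_{2})$ with $c(x_{2})\to0$, so the degenerate direction of $b_{2}^{2}(x_{2})D_{4}^{2}$ is subordinate to $k^{2}b_{1}^{2}(x_{2})$; the term $k^{2}b_{1}^{2}(x_{2})$ is harmonic–oscillator–like at its zero and pins the $x_{2}$–scale to $|k|^{-1/(\operatorname{ord}b_{1}+1)}$ when $|\xi_{4}|$ is moderate relative to $|k|$, while in the complementary regime $P_{1}^{(k)}$ behaves like the good $k=0$ operator — in neither regime does the destructive two–scale mixing of $P_{_{\!\!ABM}}$ reappear, which is what yields a $k$–uniform analyticity radius.

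Concretely, I would establish the uniform analytic estimates for $u_{k}$ from the subelliptic a priori estimate for $P_{1}^{(k)}$ (with its correct loss, uniform in $k$) iterated in the manner of Tartakoff–Treves–Derridj–Zuily: localizing functions adapted to the degenerate metric $|dx_{1}|^{2}+|dx_{2}|^{2}+(a^{2}+b_{2}^{2})|dx_{4}|^{2}+k^{2}(a^{2}+b_{1}^{2})$, control of high–order tangential derivatives along the characteristic manifold together with the normal derivatives via the subelliptic gain, and careful bookkeeping of constants so that they do not depend on $k$ and the exponential factor inherited from $f_{k}$ survives; an FBI–transform argument in $(x_{1},x_{2},x_{4})$ coupled with the Fourier series in $x_{3}$ would be an equivalent route. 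The main obstacle is exactly this step — making the iterated commutator estimates close \emph{uniformly in $k$} and with a fixed analyticity radius — and it is here that the strict inequality between the vanishing orders of $b_{2}$ and $b_{1}$ is indispensable, being what allows $k^{2}b_{1}^{2}(x_{2})$ to absorb the contributions of $b_{2}^{2}(x_{2})D_{4}^{2}$ that would otherwise reproduce the non–analytic behaviour of $P_{_{\!\!ABM}}$. Summing the resulting estimates over $k\in\mathbb Z$ gives $u\in C^{\omega}(\mathcal I\times\mathbb T^{1}_{x_{3}}\times\mathcal U)$.
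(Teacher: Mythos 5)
Your plan is genuinely different from the paper's: you Fourier–expand only in $x_{3}$, reduce to the family $P_{1}^{(k)}=D_{1}^{2}+D_{2}^{2}+\bigl(a^{2}+b_{2}^{2}\bigr)D_{4}^{2}+k^{2}\bigl(a^{2}+b_{1}^{2}\bigr)$ on $\mathcal I\times\mathcal U$, and then claim \emph{local} analytic hypoellipticity for this family with constants and analyticity radius uniform in $k$ and with the exponential weight inherited from $f_{k}$ preserved. The paper instead never passes to the Fourier side in $x_{3}$ at all: it directly substitutes $\phi_{N}D_{4}^{N}u$ into a subelliptic a priori estimate with an Ehrenpreis--H\"ormander cutoff $\phi_{N}$ that is taken independent of $x_{1},x_{2}$ (because any derivative in those directions landing on a cutoff puts you in the elliptic region), and then iterates Tartakoff-style, shifting $x_{4}$-derivatives from $u$ onto $\phi_{N}$ until the process terminates with analytic growth; the absence of any cutoff in $x_{3}$, available exactly because $\mathbb T^{1}_{x_{3}}$ is compact and $D_{3}$ commutes with $P_{1}$, is what removes the obstruction that makes $P_{\!_{ABM}}$ fail locally.

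There is a genuine gap in your argument, and it is precisely the step you flag yourself. Uniform-in-$k$ analytic hypoellipticity of $P_{1}^{(k)}$ is the whole content of the theorem and is nowhere established. You assert that the $k=0$ operator $D_{1}^{2}+D_{2}^{2}+(a^{2}+b_{2}^{2})D_{4}^{2}$ is analytically hypoelliptic, but this is itself a nontrivial result about a degenerate operator with unequal vanishing orders in $x_{1}$ and $x_{2}$, requiring either a citation or its own Tartakoff/FBI proof; and for $k\neq 0$ the zero-order potential $k^{2}(a^{2}+b_{1}^{2})$ does not change the principal symbol, hence does not change the characteristic variety $\{x_{1}=x_{2}=0,\ \xi_{1}=\xi_{2}=0\}$, so the delicate microlocal analysis near that stratum must still be carried out, now with the additional requirement that every constant in the iterated commutator closure be independent of $k$ and that the $e^{-\delta|k|}$ weight survive. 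The heuristic that $k^{2}b_{1}^{2}(x_{2})$ ``absorbs'' $b_{2}^{2}(x_{2})D_{4}^{2}$ because $b_{2}$ vanishes to higher order than $b_{1}$ is a correct intuition for \emph{why} the hypothesis matters, but it is not a proof, and translating it into a $k$-uniform a priori estimate would effectively require reproducing, on the Fourier side, the same bookkeeping the paper performs directly on $\phi_{N}D_{4}^{N}u$. As written, your proposal reduces the theorem to an unproved and at least equally hard statement, so the argument does not close.
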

\vspace*{0.3em}
A few remarks are in order.
\begin{itemize}
\item[(a)]If we take $ a(x_{1})= \!(\sin x_{1})^{r-1}$,
$b_{1}(x_{2})=\! (\sin x_{2})^{p-1}$ and $b_{2}(x_{2})=\!(\sin x_{2})^{q-1}$,
with $r$, $p$ and $q$ positive integers such that $1 < r < p < q$,
the operator $P_{1}(x,D)$ is the global version on the torus of the operator $P_{_{\!\!ABM}}$, 
(\ref{Op-ABM}), which is not local analytic hypoelliptic.
\item[(b)]We point out that if the zero order at $0$ of $b_{2}$ is equal than that of $b_{1}$ then the
operator $P_{1}(x,D)$ is microlocally anlytic hypoelliptic as showed in \cite{ABM-TrC1-2016}, hence
also global analytic hypoelliptic. Otherwise if the zero order at $0$ of $b_{2}$ is smaller than that of 
$b_{1}$ then the role of the directions $x_{3}$ and $x_{4}$ is exchanged,
i.e. the operator $P_{1}(x,D)$ is locally analytic hypoelliptic with respect to the variables $x_{1}$, 
$x_{2}$ and $x_{3}$ but globally analytic hypoelliptic with respect to the variable $x_{4}$.
\item[(c)]The operator $P_{1}$, (\ref{G/L-ABM}), belongs to the class studied by Cordaro and 
Himonas, \cite{Cordaro_Himonas-94}, therefore it is globally analytic hypoelliptic. 
\end{itemize}
\vspace*{0.2em}
\noindent
For completeness we recall the result proved in \cite{Cordaro_Himonas-94}.
\vspace*{0.2em}
\begin{theorem*}[\cite{Cordaro_Himonas-94}]
Let $P$ be a sum of squares operator, $P= \sum_{1}^{\nu}X_{j}$, on the torus
$\mathbb{T}^{N} = \mathbb{T}^{m}\times \mathbb{T}^{n}$
with variables, $(x',x'')$, $x' = (x_{1},\dots, x_{m})$, $x'' = (x_{m+1},\dots, x_{N})$ and
\begin{equation*}
X_{j} =\sum_{k=1}^{n}a_{jk}(x'')\frac{\partial}{\partial x_{m+k}} + \sum_{k=1}^{m}b_{jk}(x'')\frac{\partial}{\partial x_{k}}
\end{equation*}
are real vector fields with coefficients in $C^{\omega}\left(\mathbb{T}^{n}\right)$.
If the following two conditions hold:
\begin{itemize}
\item[{\bf (i)}] $X_{1}, \dots, X_{\nu}$ and their brackets of length at most $r$
span the tangent space at every point on $\mathbb{T}^{N}$, i.e. they satisfy the
H\"ormander condition,
\item[{\bf (ii)}] the vectors $\sum_{k=1}^{n}a_{jk}(x'')\partial_{x_{m+k}}$ span
$T_{x''}(\mathbb{T}^{n})$ for every $x'' \in \mathbb{T}^{n}$,
\end{itemize}
then the operator $P$ is globally analytic hypoelliptic on $\mathbb{T}^{N}$.
\end{theorem*}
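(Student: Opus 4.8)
The plan is to reduce the global statement on $\mathbb{T}^{N}$ to a uniform-in-parameter family of a priori estimates on $\mathbb{T}^{n}$, and then to run a Morrey--Nirenberg analytic iteration on that family. First I would expand in a partial Fourier series in the ``free'' variables $x'\in\mathbb{T}^{m}$, writing $u=\sum_{\xi'\in\mathbb{Z}^{m}}u_{\xi'}(x'')\,e^{i x'\cdot\xi'}$ and $Pu=f=\sum_{\xi'}f_{\xi'}(x'')\,e^{i x'\cdot\xi'}$. Because the $X_{j}$ have coefficients depending only on $x''$, replacing $\partial_{x_{k}}$ by $i\xi_{k}$ for $k\le m$ turns $X_{j}$ into the ordinary differential operator $X_{j}(\xi')=A_{j}+i\langle b_{j}(x''),\xi'\rangle$ on $\mathbb{T}^{n}$, with $A_{j}=\sum_{k=1}^{n}a_{jk}(x'')\partial_{x_{m+k}}$ and $b_{j}=(b_{j1},\dots,b_{jm})$, and the equation becomes $P(\xi')u_{\xi'}:=\bigl(\sum_{j}X_{j}(\xi')^{2}\bigr)u_{\xi'}=f_{\xi'}$ on $\mathbb{T}^{n}$ for every $\xi'$. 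Real analyticity of $f$ on $\mathbb{T}^{N}$ is equivalent to bounds $\|\partial_{x''}^{\gamma}f_{\xi'}\|_{0}\le C_{0}\,R^{|\gamma|}\,|\gamma|!\,e^{-\varepsilon_{0}|\xi'|}$, uniform in $\gamma$ and $\xi'$ (here $\|\cdot\|_{0}$ is the $L^{2}(\mathbb{T}^{n})$ norm), and real analyticity of $u$ amounts to the same family of bounds for the $u_{\xi'}$ with some $B,\varepsilon>0$; so the whole theorem reduces to proving such bounds for the $u_{\xi'}$.

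The two hypotheses enter in complementary ways, and I would isolate them next. By (ii) the fields $A_{j}$ span $T_{x''}\mathbb{T}^{n}$ at every point, so $\sum_{j}A_{j}^{\ast}A_{j}$ is elliptic of order $2$ on $\mathbb{T}^{n}$ with real analytic coefficients; since $P(\xi')$ has the $\xi'$-independent second-order part $\sum_{j}A_{j}^{2}$, every $P(\xi')$ is elliptic of order $2$ on $\mathbb{T}^{n}$, and the analyticity scale of its coefficients does not deteriorate with $\xi'$. In particular, for each fixed $\xi'$ the function $u_{\xi'}$ is already analytic on $\mathbb{T}^{n}$ by the Morrey--Nirenberg theorem, so the whole point is uniformity in $\xi'$. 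By (i), integrating by parts in $\langle P(\xi')v,v\rangle$ and using $X_{j}(\xi')^{\ast}=-X_{j}(\xi')-e_{j}$ with $e_{j}$ ($\xi'$-independent) the divergence of $A_{j}$ gives the clean bound $\sum_{j}\|X_{j}(\xi')v\|_{0}^{2}\le C(|\langle P(\xi')v,v\rangle|+\|v\|_{0}^{2})$ with $C$ independent of $\xi'$; restricting the subelliptic estimate for $P$ on $\mathbb{T}^{N}$ to functions $v(x'')e^{i x'\cdot\xi'}$ then yields a gain $\delta=\delta(r)>0$ of the parameter,
\begin{equation*}
\langle\xi'\rangle^{2\delta}\,\|v\|_{0}^{2}\ \le\ C\bigl(\,|\langle P(\xi')v,v\rangle|+\|v\|_{0}^{2}\,\bigr),\qquad v\in C^{\infty}(\mathbb{T}^{n}).
\end{equation*}
Hence $P(\xi')$ is invertible on $L^{2}(\mathbb{T}^{n})$ for $|\xi'|$ large with $\|P(\xi')^{-1}\|_{0\to 0}=O(\langle\xi'\rangle^{-2\delta})$; combined with the elliptic estimate $\|v\|_{H^{2}(\mathbb{T}^{n})}\le C(\|P(\xi')v\|_{0}+\langle\xi'\rangle^{2}\|v\|_{0})$ from (ii) and the exponential decay of the $f_{\xi'}$, this shows $u\in C^{\infty}(\mathbb{T}^{N})$ (which also follows directly from H\"ormander's theorem and compactness) and $\|u_{\xi'}\|_{H^{2}(\mathbb{T}^{n})}\le C\,e^{-\varepsilon_{1}|\xi'|}$ for some $0<\varepsilon_{1}<\varepsilon_{0}$.

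It remains to upgrade these $H^{2}$ bounds to analytic bounds, uniformly in $\xi'$. Here I would run the usual Morrey--Nirenberg induction on $|\alpha|$ for $\|\partial_{x''}^{\alpha}u_{\xi'}\|_{0}$, estimating $\partial_{x''}^{\alpha}u_{\xi'}$ via the elliptic estimate for $P(\xi')$ applied to $\partial_{x''}^{\alpha'}u_{\xi'}$ and commuting $\partial_{x''}^{\alpha'}$ past $P(\xi')$. The structural fact that makes this work is that the $\xi'$-dependent part of $P(\xi')$, namely $\sum_{j}\bigl(2i\langle b_{j},\xi'\rangle A_{j}+i\langle A_{j}b_{j},\xi'\rangle-\langle b_{j},\xi'\rangle^{2}\bigr)$, is of order $\le 1$ in $\partial_{x''}$ with coefficients analytic in $x''$ and of polynomial size $O(\langle\xi'\rangle^{2})$: each commutator term therefore falls strictly in $\partial_{x''}$-order while losing only a fixed power of $\langle\xi'\rangle$, and one re-absorbs that polynomial loss by feeding the resulting lower-order terms back through the gain $\langle\xi'\rangle^{2\delta}$ of the subelliptic estimate. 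If the bookkeeping is arranged so that at each stage no more than the available slack is spent, the induction closes with constants independent of $\alpha$ and $\xi'$, giving $\|\partial_{x''}^{\alpha}u_{\xi'}\|_{0}\le C\,B^{|\alpha|}\,|\alpha|!\,e^{-\varepsilon|\xi'|}$; equivalently each $u_{\xi'}$ extends holomorphically to a fixed complex strip about $\mathbb{T}^{n}$ with sup-norm $O(e^{-\varepsilon|\xi'|})$ (the finitely many small-frequency $u_{\xi'}$ being analytic by the elliptic theory anyway). Summing the partial Fourier series then yields $u\in C^{\omega}(\mathbb{T}^{N})$.

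The step I expect to be the real obstacle is exactly this last one: keeping the effective analyticity radius of $u_{\xi'}$ in $x''$ bounded below \emph{uniformly} in $\xi'$. An iteration using only the ellipticity of $P(\xi')$ in $x''$ produces an $H^{2}$-estimate with a remainder of size $O(\langle\xi'\rangle^{2}\|v\|_{0})$ and hence forces the radius to shrink like a negative power of $\langle\xi'\rangle$ --- far too much for the Fourier series to reconstitute an analytic function. The remedy is to use the two hypotheses in tandem: (ii) to have a genuine second-order elliptic operator in $x''$ whose principal part is completely insensitive to $\xi'$, and (i) to supply, through the subelliptic gain $\langle\xi'\rangle^{2\delta}$, precisely the margin needed to neutralize the polynomial-in-$\xi'$ growth of the lower-order terms at every stage of the iteration. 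Once that is in place, the rest is the familiar analytic-hypoellipticity machinery for elliptic operators, carried along with the extra discrete parameter $\xi'$.
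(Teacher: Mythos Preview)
The paper does not give its own proof of this statement: the theorem is quoted from Cordaro--Himonas \cite{Cordaro_Himonas-94} purely for context, with no argument supplied. What the paper does prove is Theorem~\ref{T1-G/L}, a particular operator in the Cordaro--Himonas class, and that proof (Section~2) follows the original \cite{Cordaro_Himonas-94} strategy closely enough to serve as the point of comparison.

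Your proposal and that strategy agree on the first move --- take the partial Fourier transform in the coefficient-free variables and reduce to the one-parameter family $P(\xi')$ on the smaller torus --- but diverge immediately after. You propose to run a uniform-in-$\xi'$ Morrey--Nirenberg analytic iteration on $\mathbb{T}^{n}$, using (ii) for ellipticity in $x''$ and (i) for a subelliptic gain $\langle\xi'\rangle^{2\delta}$ in the parameter, and you correctly isolate the one genuinely hard point: keeping the analyticity radius of $u_{\xi'}$ in $x''$ bounded below uniformly in $\xi'$. The Cordaro--Himonas route, as reproduced in the paper for Theorem~\ref{T1-G/L}, avoids this iteration altogether. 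It extracts only an $L^{2}$ bound $\|\widehat u(\cdot,\xi')\|_{0}\le Ce^{-\varepsilon|\xi'|}$ from the a~priori estimate together with a Poincar\'e-type lemma (Lemma~2.1 in the paper, Lemma~4.1 in \cite{Cordaro_Himonas-94}), and then finishes by a wave-front-set argument: the exponential decay excludes from $WF_{a}(u)$ a conic neighbourhood of the directions $\{\xi''=0\}$, hence all of $\mathrm{Char}(P)$, and the standard inclusion $WF_{a}(u)\subset WF_{a}(Pu)\cup\mathrm{Char}(P)$ forces $WF_{a}(u)=\emptyset$. No derivative-by-derivative estimate in $x''$ is ever needed; the analyticity in those variables is delivered wholesale by the microlocal conclusion.

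Your approach is in principle sound and more self-contained, but the step you yourself flag is exactly where the work lies: commuting $\partial_{x''}^{\alpha}$ past the multiplication operator $i\langle b_{j}(x''),\xi'\rangle$ produces a factor of $|\xi'|$ per commutator, while the subelliptic estimate returns only $\langle\xi'\rangle^{2/r}$, so a naive count loses. One has to organise the iteration so that these $|\xi'|$ factors are absorbed by the $\|X_{j}(\xi')\,\cdot\,\|$ terms on the left (which genuinely control $|\xi'|$-weighted pieces through the zeroth-order part of $X_{j}(\xi')$) rather than by the fractional gain alone; this is doable but laborious. The microlocal shortcut in \cite{Cordaro_Himonas-94} and in the paper's Section~2 simply bypasses the issue.
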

\vspace*{0.3em}
\noindent
Next we look at the global and semi-global analytic regularity for operators which are a global version on 
the four dimensional torus of the operator studied in \cite{BM-TrC2-2016}.
\begin{theorem}\label{T2-Gl+G/L}
Let $P_{2}(x,D)= \sum_{j=1}^{6}X_{j}^{2}(x,D)$ be the operator given by
\begin{equation}\label{G-G/L-BM}
D_{1}^{2}+ a^{2}_{1}(x_{1})\left( D_{3}^{2}+ D_{4}^{2}\right)
+a^{2}_{2}(x_{1})\left(D_{2}^{2}+b_{1}^{2}(x_{2})D_{3}^{2}+b_{2}^{2}(x_{2}) D_{4}^{2}\right)
\end{equation}
on $\mathbb{T}^{4}$,
where $a_{j}$ and $b_{j}$, $j =1,2$, are real valued real analytic functions not identically zero. 
We have:
\begin{itemize}
\item[{\bf (i)}]Let $x_{1}^{0}$ be a common zero of $a_{1}$ and $a_{2}$ and assume that the
zero order at $x_{1}^{0}$ of $a_{2}$ is strictly greater than that of $a_{1}$. Let $\mathcal{I}_{1}$
an open neighborhood of $x_{1}^{0}$ and $\mathcal{I}_{2}$ a sub-interval of 
$\mathbb{T}^{1}_{x_{2}}$. 
The condition 
$P_{2}u \in C^{\omega}(\mathcal{I}_{1}\times \mathcal{I}_{2}\times\mathbb{T}^{2}_{x''})$,
$x''=(x_{3},x_{4})$, implies 
$u \in C^{\omega}(\mathcal{I}_{1}\times \mathcal{I}_{2}\times\mathbb{T}^{2}_{x''})$. 
\item[{\bf (ii)}]Let $(x_{1}^{0},x_{2}^{0})$ be a zero of $a_{i}$ and $b_{i}$, $i=1,2$, and assume 
that the zero order at $x_{1}^{0}$ of $a_{1}$ is strictly greater than that of $a_{2}$ and that the 
zero order at $x_{2}^{0}$ of $b_{2}$ is strictly greater than that of $b_{1}$.
Let $\mathcal{I}$ an open neighborhood of $(x_{1}^{0},x_{2}^{0})$ and $\mathcal{U}$ a sub-interval 
of $\mathbb{T}^{1}_{x_{4}}$. 
The condition 
$P_{2}u \in C^{\omega}(\mathcal{I}\times \mathbb{T}^{1}_{x_{3}}\times\mathcal{U})$,
implies $u \in C^{\omega}(\mathcal{I}\times \mathbb{T}^{1}_{x_{3}}\times\mathcal{U})$. 
\end{itemize}
\end{theorem}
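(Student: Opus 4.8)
The plan is to prove Theorem~\ref{T2-Gl+G/L} by combining Fourier analysis in the periodic variables with the microlocal subelliptic estimates that were the technical heart of \cite{BM-TrC2-2016}, adapted to the present ``semi-global'' setting. Since the two cases are structurally analogous, I would concentrate on case (ii), the genuinely new and harder one, and indicate the (simpler) modifications for case (i) at the end. Throughout, write $x''=(x_3,x_4)$ and expand $u$ and $f=P_2u$ in a (partial) Fourier series $u(x)=\sum_{k\in\mathbb{Z}^2}\widehat u(x_1,x_2,x_3,k_4)e^{ik_4 x_4}$ (and similarly dualizing $x_3$ when convenient); the point of keeping $x_3$ as a torus variable in (ii) is precisely that the operator is only globally, not locally, analytic hypoelliptic in that direction, so we must allow nontrivial low frequencies there.

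\medskip
First I would set up the \emph{reduction to large frequency}. For each fixed $k_4$ the operator $P_2$ acts on functions of $(x_1,x_2,x_3)$ as
\[
P_2^{(k_4)} = D_1^2 + a_1^2(x_1)\bigl(D_3^2+k_4^2\bigr) + a_2^2(x_1)\bigl(D_2^2 + b_1^2(x_2)D_3^2 + b_2^2(x_2)k_4^2\bigr),
\]
and by the hypothesis that $b_2$ vanishes to higher order than $b_1$ at $x_2^0$ and $a_1$ to higher order than $a_2$ at $x_1^0$ this is, near $(x_1^0,x_2^0)$, a deformation of the BM model \eqref{Op-BM} with the roles of the two ``high'' directions $(x_3,x_4)$ separated. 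Using the classical $C^\infty$ hypoellipticity of sums of squares (H\"ormander) one first reduces the problem to proving analytic \emph{a priori} estimates: it suffices to show that any $C^\infty$ solution $u$ on $\mathcal I\times\mathbb T^1_{x_3}\times\mathcal U$ with $P_2 u$ analytic there satisfies, on a slightly smaller set $\mathcal I'\times\mathbb T^1_{x_3}\times\mathcal U'$, estimates of the form $\|D^\alpha u\|_{L^2}\le C^{|\alpha|+1}|\alpha|!$. The standard device is to localize with cutoffs $\chi(x_1,x_2)\psi(x_4)$, commute high powers $D_4^N$ (and, after the $x_3$-global argument, $D_3^N$) through $P_2$, and control the resulting commutators; the torus directions contribute no boundary terms, which is why the statement is clean.

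\medskip
The core estimate — and this is where I expect the main obstacle — is an \emph{optimal subelliptic/analytic-microlocal inequality} for $P_2^{(k_4)}$, uniform in $k_4$, of the type proved in \cite{BM-TrC2-2016} but now exploiting that we are allowed to be global in $x_3$. Concretely, after a further Fourier decomposition in $x_3$ (writing $k_3$ for the dual variable) one must bound, for $|k_4|\to\infty$ and $|k_3|\lesssim |k_4|$, the metric balls and the behaviour of the effective one–dimensional operators
\[
D_1^2 + a_1^2(x_1)\bigl(k_3^2+k_4^2\bigr) + a_2^2(x_1)\bigl(D_2^2 + b_1^2(x_2)k_3^2 + b_2^2(x_2)k_4^2\bigr)
\]
by a Grushin/quasi-mode analysis near $(x_1^0,x_2^0)$: rescale $x_1 - x_1^0\sim |k_4|^{-1/(N_1+1)}$ and $x_2-x_2^0\sim |k_4|^{-1/(N_2+1)}$ where $N_1,N_2$ are the vanishing orders of $a_2$, $b_2$, and show that the loss of derivatives is exactly the Gevrey exponent $s$ from \cite{BM-TrC2-2016} in the local variables $x_1,x_2$ — \emph{but} that when combined with the periodicity of $x_4$ (so that $k_4$ ranges over a lattice and one can sum a geometric series in $|k_4|$) and the periodicity of $x_3$ (so that the ``bad'' low-frequency directions are finite-dimensional) the net estimate upgrades to the genuine analytic bound. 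The delicate book-keeping is to make all constants independent of $k_3,k_4$ and to handle the transition region $|k_3|\sim|k_4|$ where neither direction dominates; here I would invoke the anisotropic FBI-transform / analytic-wave-front machinery exactly as in the proofs of Theorems~\ref{T1-G/L}--\ref{T1.1-G/L}, since the algebraic structure is the same and only the location of the vanishing factors differs.

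\medskip
Finally, for case (i) the argument is lighter: there is no $x_2$-degeneracy playing a role (one keeps $\mathcal I_2\subset\mathbb T^1_{x_2}$ and both $x_3,x_4$ global), so after Fourier series in $x_3,x_4$ one is reduced to a family, parametrized by $(k_3,k_4)$, of operators $D_1^2 + a_1^2(x_1)(k_3^2+k_4^2) + a_2^2(x_1)(D_2^2+\cdots)$ in which only the single variable $x_1$ degenerates, with $a_1$ vanishing to lower order than $a_2$ at $x_1^0$; this is governed by a one–dimensional Grushin problem of the Baouendi--Goulaouic / \cite{Cordaro_Himonas-94} type, and the uniform analytic estimate follows by the same rescaling-and-geometric-summation scheme, now with only one scale. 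Assembling the Fourier modes and summing the resulting convergent series yields $u\in C^\omega$ on the asserted sets, completing the proof.
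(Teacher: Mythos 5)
Your outline takes a genuinely different route from the paper, and I think the key step is not only different but currently unjustified. The paper does \emph{not} prove Theorem~\ref{T2-Gl+G/L} by partial Fourier series plus a Grushin/quasi-mode rescaling; it uses the Tartakoff scheme already set up for Theorem~\ref{T1.1-G/L}: a subelliptic a priori estimate of the type~(\ref{Sub-P2-1}), Ehrenpreis--H\"ormander cutoffs $\phi_N$ depending on $N$ in the \emph{local} variables only, high powers $D_2^N$ (resp.\ $D_4^N$) inserted and commuted through $P_2$, and --- crucially --- a \emph{fixed, $N$-independent} finite partition of unity $\{\varphi_j\}$ on $\mathbb T^2_{x''}$ (resp.\ $\mathbb T^1_{x_3}$) in the globally periodic directions. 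The entire gain from globality is concentrated in one spot: when a field like $X_2=a_1(x_1)D_3$ is commuted through $\varphi_j\phi_N D_2^N$, the $x_3$-derivative falls on the \emph{fixed} cutoff $\varphi_j$, so $\sup_j\|\varphi_j^{(1)}\|_\infty$ is an absolute constant; the resulting term $\|a_1\varphi_j^{(1)}\phi_N D_2^N u\|^2$ is bounded by $\sum_j\|\varphi_j\phi_N D_2^N u\|^2$, and after reinserting the subelliptic estimate and summing over $j$ it can be absorbed into the left-hand side. In a genuinely local problem one would be forced to take $\varphi$ to be an $N$-dependent cutoff in $x_3$ as well, whose derivatives cost powers of $N$ each time, and that accumulation is precisely what produces the nontrivial Gevrey exponent of \cite{BM-TrC2-2016}. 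This mechanism is the content of the theorem.

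The gap in your proposal is exactly at this point. You propose to first show that the effective family $P_2^{(k_3,k_4)}$ loses derivatives ``with exactly the Gevrey exponent $s$ from \cite{BM-TrC2-2016} in the local variables $x_1,x_2$'' and then assert that periodicity of $x_3,x_4$ ``upgrades'' this to an analytic bound because $k_3,k_4$ range over a lattice and ``one can sum a geometric series.'' That inference does not go through as stated: a Gevrey-$s$ decay of Fourier coefficients, $\|\widehat u(\cdot,k_3,k_4)\|\lesssim \exp(-\varepsilon|k''|^{1/s})$ with $s>1$, sums to a Gevrey-$s$ function, not to an analytic one, and the remark about ``bad low-frequency directions being finite-dimensional'' does not repair this. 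What you actually need is a uniform \emph{exponential} decay in $|k''|$, and obtaining it from a scaling argument would require identifying why the rescaled spectral data improve once $x_3,x_4$ are compactified --- i.e.\ you would be re-deriving, in Fourier-side language, exactly the fixed-partition-of-unity absorption step that the paper performs directly in physical space. Note also that the paper points out (Remark (b) after the Corollary) that $P_2$ is \emph{not} in the Cordaro--Himonas class, precisely because $a_2(x_1)$ multiplies $D_2^2$; so the transversal-ellipticity hypothesis that makes the Fourier-transform energy method of Theorem~\ref{T1-G/L} work fails here, and your reduction ``this is governed by a one-dimensional Grushin problem of the Baouendi--Goulaouic / \cite{Cordaro_Himonas-94} type'' in case (i) glosses over that obstruction. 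If you want to pursue the Fourier route you would need a genuine uniform resolvent/spectral estimate for $P_2^{(k'')}$ with exponentially small errors, which is a substantially harder piece of analysis than the sketch indicates; the Tartakoff localization used in the paper avoids it entirely.
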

\noindent
Moreover, with the aid of the partition of unity we have:
%
\begin{corollary}
\textit{Let $P_{2}(x,D)$ be as in (\ref{G-G/L-BM}). Then the operator $P_{2}(x,D)$ is globally analytic 
hypoelliptic on $\mathbb{T}^{4}$.}
\end{corollary}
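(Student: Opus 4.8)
The plan is to deduce global analytic hypoellipticity on all of $\mathbb{T}^4$ from the local/semi-global statements in Theorem~\ref{T2-Gl+G/L} together with the classical result of Cordaro--Himonas recalled above, by a partition-of-unity argument in the variables $x' = (x_1,x_2)$. Suppose $u \in \mathcal{D}'(\mathbb{T}^4)$ and $P_2 u = f \in C^\omega(\mathbb{T}^4)$; we must show $u \in C^\omega(\mathbb{T}^4)$. Since analyticity is a local property, it suffices to show that every point $(x_1^0,x_2^0) \in \mathbb{T}^2_{x'}$ has a neighborhood $\mathcal{I}$ in $\mathbb{T}^2_{x'}$ such that $u$ is analytic on $\mathcal{I} \times \mathbb{T}^2_{x''}$.

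First I would dispose of the points $(x_1^0,x_2^0)$ at which the relevant coefficients do not all vanish. If $a_1(x_1^0) \neq 0$, then near $x_1^0$ the vector fields $a_1(x_1) D_3$ and $a_1(x_1) D_4$ are nonvanishing, so together with $D_1$ they span all of $T\mathbb{T}^4$ except possibly the $D_2$-direction — and since the full H\"ormander condition holds (the brackets of $D_1$, $a_1 D_3$, $a_1 D_4$, $a_2 D_2$, $a_2 b_1 D_3$, $a_2 b_2 D_4$ generate the tangent space by hypothesis), condition (ii) of the Cordaro--Himonas theorem is met on a neighborhood $\mathcal{I}_0 \times \mathbb{T}^1_{x_2}$ of the slice, where the roles of the ``base'' variables are played by $x_1$ (and $x_2$ if $a_2(x_1^0)\neq 0$ as well) and the ``fiber'' variables by the rest; one then applies that theorem (on the torus $\mathbb{T}^4$ after localizing, or directly to the fact that the operator has real-analytic coefficients depending only on $x'$ and is elliptic in the missing directions) to conclude analyticity there. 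Likewise, the points where $a_2(x_1^0) \neq 0$ but $a_1(x_1^0)=0$ are handled by noting that $a_2(x_1)D_2$ is then nonvanishing, so $D_1, a_2 D_2$ plus the H\"ormander brackets reduce us to an elliptic-in-$(x_3,x_4)$ situation with analytic coefficients, again covered by the classical theory.

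The remaining points are the common zeros of $a_1$ and $a_2$. At such a point $(x_1^0,x_2^0)$, compare the zero orders of $a_1$ and $a_2$ at $x_1^0$: if $\mathrm{ord}_{x_1^0}(a_2) > \mathrm{ord}_{x_1^0}(a_1)$, then Theorem~\ref{T2-Gl+G/L}(i) applies verbatim (with $\mathcal{I}_2 = \mathbb{T}^1_{x_2}$, which is allowed since $\mathcal{I}_2$ may be taken to be any sub-interval and one may cover $\mathbb{T}^1_{x_2}$ by finitely many such) and gives analyticity of $u$ on a neighborhood $\mathcal{I}_1 \times \mathbb{T}^1_{x_2} \times \mathbb{T}^2_{x''}$. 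If instead $\mathrm{ord}_{x_1^0}(a_1) > \mathrm{ord}_{x_1^0}(a_2)$, I would use Theorem~\ref{T2-Gl+G/L}(ii): this covers neighborhoods of those $(x_1^0,x_2^0)$ that are also common zeros of $b_1,b_2$ with $\mathrm{ord}_{x_2^0}(b_2) > \mathrm{ord}_{x_2^0}(b_1)$, giving analyticity on $\mathcal{I} \times \mathbb{T}^1_{x_3} \times \mathcal{U}$ for each sub-interval $\mathcal{U}$ of $\mathbb{T}^1_{x_4}$, hence — covering $\mathbb{T}^1_{x_4}$ by finitely many such $\mathcal{U}$ — on $\mathcal{I} \times \mathbb{T}^2_{x''}$; and the remaining sub-cases in the $x_2$-variable (where $b_1(x_2^0)\neq 0$, or $\mathrm{ord}_{x_2^0}(b_1) \geq \mathrm{ord}_{x_2^0}(b_2)$, or $b_1(x_2^0)=b_2(x_2^0)=0$ with the order inequality reversed) are again either elliptic in the $x_3$ or $x_4$ direction after factoring out the nonvanishing coefficient, or follow by the symmetry $x_3 \leftrightarrow x_4$ from a version of Theorem~\ref{T2-Gl+G/L}(ii), or are microlocally analytic hypoelliptic in the spirit of remark~(b), and in all these cases analyticity near the point follows. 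The cases where $\mathrm{ord}_{x_1^0}(a_1) = \mathrm{ord}_{x_1^0}(a_2)$ are the borderline ones; here $a_1$ and $a_2$ differ by an analytic nonvanishing factor near $x_1^0$, so $P_2$ is, up to an elliptic factor, of a form in which the $(D_3^2+D_4^2)$ block and the $x_1$-degeneracy decouple from the $b_j$-structure, and one reduces either to the classical Cordaro--Himonas setting or to the already-treated regimes.

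The main obstacle I anticipate is the bookkeeping of the finitely many coordinate-patch cases at the degenerate points: one must verify that the finite cover of $\mathbb{T}^2_{x'}$ can be chosen so that on each patch exactly one of the hypotheses — ellipticity in some $D_j$, Theorem~\ref{T2-Gl+G/L}(i), Theorem~\ref{T2-Gl+G/L}(ii) (possibly with $x_3,x_4$ swapped), or the Cordaro--Himonas criterion — applies, and that the boundary-of-interval issues (Theorems~\ref{T2-Gl+G/L}(i),(ii) are stated for neighborhoods and sub-intervals) genuinely patch up to global conclusions in $x_2$, $x_3$, $x_4$. Once the cover is in place the conclusion is immediate, since analyticity on each member of a finite open cover of $\mathbb{T}^2_{x'} \times \mathbb{T}^2_{x''}$ yields analyticity on $\mathbb{T}^4$.
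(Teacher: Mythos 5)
Your overall strategy --- cover $\mathbb{T}^2_{x'}$ by patches and invoke Theorem~\ref{T2-Gl+G/L} (possibly after swapping $x_3\leftrightarrow x_4$) at the degenerate points, ellipticity or classical results elsewhere, then patch --- is indeed the strategy the paper has in mind; the paper itself offers nothing beyond the phrase ``with the aid of the partition of unity,'' so your write-up is the more detailed of the two. However, two of your case dispatches are not correct as stated, and together they leave a genuine gap.

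First, the appeal to Cordaro--Himonas when $a_1(x_1^0)\neq 0$, $a_2(x_1^0)=0$ does not go through. Their condition {\bf (ii)} requires the \emph{$x'$-components} of the vector fields to span $T_{x'}\mathbb{T}^2_{x'}$ at \emph{every} $x'$; here those components are $D_1$ and $a_2(x_1)D_2$, and $a_2(x_1^0)=0$, so the condition fails precisely on the slice you are trying to handle. There is no ``local'' version of their theorem to fall back on, and in fact near such a point the operator reduces, modulo elliptic factors, to something of Baouendi--Goulaouic type $D_1^2+D_2^2+a_1^2(x_1)(D_3^2+D_4^2)+\cdots$, which is \emph{not} locally analytic hypoelliptic. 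What rescues you is that, after dividing by $a_2^2$, the operator takes the $P_1$-form of \eqref{G/L-ABM}, so Theorem~\ref{T1-G/L} and Theorem~\ref{T1.1-G/L} (which are semi-global, not local) give the analyticity on the relevant patch $\mathcal{I}\times\mathbb{T}^2_{x''}$; you never invoke those theorems in your proof, and you would need to.

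Second, in the regime $\mathrm{ord}_{x_1^0}(a_1)>\mathrm{ord}_{x_1^0}(a_2)$, the sub-case where exactly one of $b_1(x_2^0)$, $b_2(x_2^0)$ is nonzero is not ``elliptic in the $x_3$ or $x_4$ direction after factoring out the nonvanishing coefficient.'' If, say, $b_1(x_2^0)\neq 0$ and $b_2(x_2^0)=0$, the coefficient of $D_3^2$ is $a_1^2+a_2^2b_1^2\sim a_2^2(x_1)b_1^2(x_2^0)$, which still vanishes at $x_1=x_1^0$; no amount of factoring removes the $x_1$-degeneracy. Worse, the depth-two Poisson--Treves stratum is still non-symplectic in this sub-case (it is the same $\tilde\Sigma$-type leaf, with $x_3$ and $x_4$ free), so one should \emph{not} expect local analytic hypoellipticity here. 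What is true, and what you would need to argue, is that the a priori estimate argument used for Theorem~\ref{T2-Gl+G/L}~(ii) --- cutoff $\phi_N(x_4)$ only, estimate $\phi_N D_4^N u$, absorb commutators with $a_1D_4$ and $a_2b_2D_4$ --- does not actually use that $b_1$ vanishes at $x_2^0$; the theorem's hypothesis is stated for the extremal (non-locally-AH) case, but the proof applies more broadly. Without spelling that out, the dispatch of these patches is a genuine hole in the partition-of-unity argument, and ``in all these cases analyticity near the point follows'' is an assertion, not a proof.

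The remaining pieces of your sketch --- Theorem~\ref{T2-Gl+G/L}~(i) with a cover of $\mathbb{T}^1_{x_2}$ by subintervals, the equal-order case $\mathrm{ord}(a_1)=\mathrm{ord}(a_2)$ reducing to a Grushin-type operator $D_1^2+a_2^2\bigl(D_2^2+\tilde b_1^2D_3^2+\tilde b_2^2D_4^2\bigr)$ with $\tilde b_j$ nonvanishing, and the symmetry $x_3\leftrightarrow x_4$ --- are fine, and the finite-cover patching at the end is standard.
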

\vspace*{0.3em}
Some remarks are in order.
\begin{itemize}
\item[(a)]If we take $a_{1}(x_{1})= \!(\sin x_{1})^{r+\ell-1}$, $ a_{2}(x_{1})= \!(\sin x_{1})^{\ell}$
$b_{1}(x_{2})=\! (\sin x_{2})^{p-1}$ and $b_{2}(x_{2})=\!(\sin x_{2})^{q-1}$,
with $r$, $p$, $q$ and $\ell$ positive integers such that $1 < r < p < q$,
the operator $P_{2}(x,D)$ is the global version on the torus of the operator $P_{_{\!\!BM}}$, 
(\ref{Op-BM}), which is not local analytic hypoelliptic.
\item[(b)]The operator $P_{2}$, (\ref{G-G/L-BM}), does not belong to the class studied by Cordaro 
and Himonas, \cite{Cordaro_Himonas-94}. 
\item[(c)]Theorem \ref{T2-Gl+G/L}-\textbf{(ii)}: if the zero order at $x_{2}^{0}$ of $b_{2}$ is equal than that of $b_{1}$ then the
operator $P_{2}(x,D)$ is microlocally anlytic hypoelliptic as showed in \cite{BM-TrC2-2016}, hence
also global analytic hypoelliptic. Otherwise if the zero order at $x_{2}^{0}$ of $b_{2}$ is smaller than 
that of $b_{1}$ then the role of the directions $x_{3}$ and $x_{4}$ is exchanged,
i.e. the operator $P_{2}(x,D)$ is locally analytic with respect to the variables $x_{1}$, 
$x_{2}$ and $x_{3}$ and globally analytic with respect to the variable $x_{4}$.
\end{itemize}
\vspace*{1em}
\begin{remark*}
The results obtained are ``consistent" with the global version of the Treves conjecture,
\cite{Treves_Cj-2006}. In both case the (semi-)global analytic hypoellipticity  is due to the fact that
the bicharacteristic leaf of the missing stratum, see Remark 2.1\cite{BM-TrC2-2016}, 
$\tilde{\Sigma}= \left\lbrace (0,0,x_{3},x_{4}; 0,0,0,\xi_{4})| \xi_{4}\neq 0\right\rbrace$
is compact. 
 
\end{remark*}
%
\vspace*{1em}
\noindent
The interest in this work was inspired by the seminal works of Cordaro and Himonas, 
\cite{Cordaro_Himonas-94} and \cite{Cordaro_Himonas-98}, and Tartakoff, \cite{Tartakoff-96}.
To obtain the results we will follows the ideas in  \cite{Cordaro_Himonas-94}, proof of the Theorem 
\ref{T1-G/L}, and the ideas in \cite{Tartakoff-96}, proof of the Theorems \ref{T1.1-G/L} and
\ref{T2-Gl+G/L}.
\section{Proof of the theorem \ref{T1-G/L}}
Without loss of generality we assume that $x'=(0,0)$ is a zero for the functions $a$, $b_{1}$ and 
$b_{2}$,
 $\mathcal{I} \doteq \mathcal{I}_{1}\times\mathcal{I}_{2}=\, ]-\delta_{1},\delta_{1}[\times ]-
 \delta_{2},\delta_{2}[$, $\delta_{i}>0$, $a(x_{1})\neq 0 $ for 
 $x_{1}\in \mathcal{I}_{1}\setminus\lbrace 0 \rbrace$ and $b_{j}(x_{2})\neq 0 $ for 
 $x_{2}\in \mathcal{I}_{2}\setminus\lbrace 0\rbrace$, $j=1,2$. 
By H\"ormander theorem, \cite{H67},  $P_{1}$ is hypoelliptic, therefore we can assume 
$u \in C^{\infty}(\mathcal{I}\times \mathbb{T}^{2}_{x''})$. 
Taking the Fourier transform with respect to $x''$ we have
\begin{equation*}
\widehat{P_{2}u}(x',\xi'')\!=\!
\widehat{D_{1}^{2}u}(x',\xi'')+ \widehat{D_{2}^{2}u}(x',\xi'')+
\left[a^{2}(x_{1})|\xi''|^{2}\!\!+b_{1}^{2}(x_{2})\xi_{3}^{2}+b_{2}^{2}(x_{2})\xi^{2}_{4}\right]\!
\widehat{u}(x',\xi'').
\end{equation*} 
We multiply by $\bar{\widehat{u}}$ and integrate in $\mathcal{I}$: 
\begin{align*}
\int_{\mathcal{I}}\widehat{P_{1}u}(x',\xi'')\bar{\widehat{u}}(x',\xi'')&dx'=
\int_{\mathcal{I}}\left((\widehat{\partial_{1}^{2}u})(x',\xi'')
+(\widehat{\partial_{2}^{2}u})(x',\xi'')\right)\bar{\widehat{u}}(x',\xi'')dx'\\
&+\int_{\mathcal{I}}\left[a^{2}(x_{1})|\xi''|^{2}
+b_{1}^{2}(x_{2})\xi_{3}^{2}+b_{2}^{2}(x_{2})\xi^{2}_{4}\right]|\widehat{u}(x',\xi'')|^{2}dx'.
\end{align*}
We have
\begin{align}\label{eq-P1}
&\int_{\mathcal{I}}\!\left[a^{2}(x_{1})|\xi''|^{2}+b_{1}^{2}(x_{2})\xi_{3}^{2}
+b_{2}^{2}(x_{2})\xi^{2}_{4}\right]|\widehat{u}(x',\xi'')|^{2}dx'
+\int_{\mathcal{I}}\!|\widehat{u}_{x_{1}}(x',\xi'')|^{2}dx'\\
&
\nonumber
+\int_{\mathcal{I}}|\widehat{u}_{x_{2}}(x',\xi'')|^{2}dx'
=\int_{\mathcal{I}_{1}}\!\!\widehat{u}_{x_{2}}(x',\xi'')\bar{\widehat{u}}(x',\xi'')
\Big|_{x_{2}=-\delta_{2}}^{x_{2}=\delta_{2}}dx_{2}\\
&
\nonumber
+\int_{\mathcal{I}_{2}}\!\!\widehat{u}_{x_{1}}(x',\xi'')\bar{\widehat{u}}(x',\xi'')
\Big|_{x_{1}=-\delta_{1}}^{x_{1}=\delta_{1}}dx_{2}
\int_{\mathcal{I}}\widehat{P_{2}u}(x',\xi'')\bar{\widehat{u}}(x',\xi'')dx',
\end{align}
where $\displaystyle\widehat{u}_{x_{i}} \doteq \displaystyle\widehat{\partial_{i}u}$, $i=1,2$.
Since $Pu \in C^{\omega}(\mathcal{I}\times\mathbb{T}^{2}_{x''})$ and $P$ is elliptic away
from $(0,0)$ we can estimate the left hand side of the above equality by
$C\displaystyle e^{-\varepsilon|\xi''|}$, with $C$ and $\varepsilon$ suitable positive constants.\\
In order to complete the proof we need of an analogous, in two variable, of the Lemma 4.1 in 
\cite{Cordaro_Himonas-94}. 
\begin{lemma}
For $f\in C^{\infty}(\overline{\mathcal{I}})$ let 
\begin{equation*}
\|f\|_{g}^{2}=\int_{\mathcal{I}} g^{2}(x')|f(x')|^{2}dx' +\int_{\mathcal{I}}|(\partial_{1}f)(x')|^{2}
+|(\partial_{2}f)(x')|^{2}\,dx',
\end{equation*}
where $g$ is a real analytic function on $\mathcal{I}$ not identically zero such that $g(0)=0$
and $g(x')\neq 0 $ for every $x'\in \overline{\mathcal{I}}\setminus \lbrace 0\rbrace$.  
Then there is a positive constant depending on $g$ such that
\begin{align}\label{Es-L1}
\|f\|^{2}_{0}\leq C\|f\|^{2}_{g}.
\end{align}
\end{lemma}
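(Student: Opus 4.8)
The plan is to prove (\ref{Es-L1}) by a compactness argument. Since the gradient part of $\|f\|_0^2$ already appears in $\|f\|_g^2$, the inequality reduces to showing $\int_{\mathcal{I}}|f|^2\,dx' \le C\|f\|_g^2$. First I would record that, $\mathcal{I}$ being a bounded rectangle (hence with Lipschitz boundary), the Rellich--Kondrachov embedding $H^1(\mathcal{I}) \hookrightarrow L^2(\mathcal{I})$ is compact.

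Now suppose the estimate fails. Rescaling, there is a sequence $(f_n)$ in $C^\infty(\overline{\mathcal{I}})$ with $\|f_n\|_0 = 1$ and $\|f_n\|_g \to 0$; in particular $\int_{\mathcal{I}} g^2|f_n|^2\,dx' \to 0$ and $\|\nabla f_n\|_{L^2(\mathcal{I})} \to 0$, while $\|f_n\|_{L^2(\mathcal{I})}\le 1$, so $(f_n)$ is bounded in $H^1(\mathcal{I})$. Passing to a subsequence, $f_n \to f$ in $L^2(\mathcal{I})$; combined with $\nabla f_n \to 0$ in $L^2$ this makes $(f_n)$ Cauchy in $H^1(\mathcal{I})$, so $f_n \to f$ in $H^1(\mathcal{I})$ and $\nabla f = 0$ a.e. As $\mathcal{I}$ is connected, $f$ is a.e. a constant $c$, and from $\|f_n\|_0 \to \|f\|_0 = 1$ with $\nabla f = 0$ we get $|c|^2|\mathcal{I}| = 1$, so $c \ne 0$. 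Finally, $g$ is continuous on the compact set $\overline{\mathcal{I}}$, hence bounded, so $g^2|f_n|^2 \to |c|^2 g^2$ in $L^1(\mathcal{I})$ and $\int_{\mathcal{I}} g^2|f_n|^2\,dx' \to |c|^2\int_{\mathcal{I}} g^2\,dx'$, which is strictly positive because $g$ is continuous and not identically zero. This contradicts $\int_{\mathcal{I}} g^2|f_n|^2\,dx' \to 0$, and (\ref{Es-L1}) follows, with a constant depending on $g$ and $\mathcal{I}$.

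I do not anticipate a real obstacle; the only care needed is that the Rellich--Kondrachov embedding applies to the rectangle $\mathcal{I}$ and that $g$ is bounded on $\overline{\mathcal{I}}$ (automatic from the analyticity hypothesis, which in the applications holds on a neighborhood of $\overline{\mathcal{I}}$). For a quantitative proof closer to Lemma 4.1 of \cite{Cordaro_Himonas-94}, one may instead fix an open ball $B$ with $\overline{B}\subset\mathcal{I}$ on which $|g|\ge\gamma>0$, prove the elementary Poincar\'e-type bound $\int_{\mathcal{I}}|f|^2\,dx' \le C_1\big(\int_B|f|^2\,dx' + \int_{\mathcal{I}}|\nabla f|^2\,dx'\big)$ by estimating $|f(x')-f(y')|^2$ along axis-parallel broken lines in the rectangle, and then use $\int_B|f|^2\,dx' \le \gamma^{-2}\int_{\mathcal{I}} g^2|f|^2\,dx'$; this yields (\ref{Es-L1}) with $C = 1 + C_1\max(1,\gamma^{-2})$. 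Note that both arguments use only that $g$ is continuous on $\overline{\mathcal{I}}$ and not identically zero; the hypotheses $g(0)=0$ and $g\ne 0$ off the origin are not needed for this lemma (they enter later, when $g$ is built from $a,b_1,b_2$).
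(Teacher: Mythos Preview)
Your compactness argument is correct. The paper, however, takes precisely the direct route you sketch as an alternative at the end: it writes
\[
f(x_1,x_2)=f(y_1,y_2)+\int_{y_2}^{x_2}\partial_2 f(y_1,t_2)\,dt_2+\int_{y_1}^{x_1}\partial_1 f(t_1,x_2)\,dt_1,
\]
uses that $g^2\ge\alpha^2>0$ on the sub-rectangle $]\tfrac{\delta_1}{2},\delta_1[\times]\tfrac{\delta_2}{2},\delta_2[$ to control $|f(y')|^2$ by $\alpha^{-2}g^2(y')|f(y')|^2$, squares and averages in $y'$ over that sub-rectangle, then integrates in $x'$ over $\mathcal{I}$. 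So your ``quantitative'' variant \emph{is} the paper's proof (with your ball $B$ replaced by a sub-rectangle), while your main argument trades the explicit constant for a clean contradiction via Rellich--Kondrachov. Both routes, as you observe, use only that $g$ is continuous on $\overline{\mathcal{I}}$ and not identically zero; the hypotheses $g(0)=0$ and $g\neq 0$ off the origin are tailored to the application rather than to the lemma. One minor slip: in the paper's notation $\|\cdot\|_0$ is the $L^2$ norm, so there is no ``gradient part of $\|f\|_0^2$'' to speak of; your reduction to $\int_{\mathcal{I}}|f|^2\le C\|f\|_g^2$ is then immediate, and the rest of the argument is unaffected.
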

\begin{proof}
We have
\begin{align*}
f(x_{1},x_{2})= f(y_{1},y_{2}) + \int_{y_{1}}^{x_{2}}(\partial_{2}f)(y_{1},t_{2})\,dt_{2}
+\int_{y_{1}}^{x_{1}}(\partial_{1}f)(t_{1},x_{2})\, dt_{1}.
\end{align*}
Since $g(y') \neq 0$ for every $y'\in \overline{\mathcal{I}}\setminus \lbrace 0\rbrace$, there exists
$\alpha > 0$ on $ ]\frac{\delta_{1}}{2},\delta_{1}\times ]\frac{\delta_{1}}{2},\delta_{2}[$ such
that $g^{2}(y')> \alpha^{2}$, we have
\begin{align*}
|f(x_{1},x_{2})|^{2}
\!\leq \! C\! \left(
\int_{\mathcal{I}}\!g^{2}(y')|f(y')|^{2}\,dy' 
\!+\! \int_{\mathcal{I}}\!\!(\partial_{2}f)(y_{1},t_{2})\,dt_{2}\,dy_{1}
\!+\!\int_{-\delta_{1}}^{\delta_{1}}\!\!\!\!(\partial_{1}f)(t_{1},x_{2})\, dt_{1}
\right)\!,
\end{align*}
where $C$ depends on $\alpha$, $\delta_{1}$ and $\delta_{2}$.
By integrating the above inequality on $\mathcal{I}$ with respect to $x'$ we obtain (\ref{Es-L1}).
\end{proof}
\noindent
Applying the above Lemma with $f(x')=\widehat{u}(x',\xi'')$ and
$g^{2}(x')= a^{2}(x_{1})|\xi''|^{2}+b_{1}^{2}(x_{2})\xi_{3}^{2} +b_{2}^{2}(x_{2})\xi^{2}_{4}$,
$\xi''\neq 0$, we can estimate from below the right hand side of (\ref{eq-P1}), equal to 
$\|\widehat{u}(\cdot,\xi'')\|_{g}^{2}$, with $\| \widehat{u} (\cdot, \xi'')\|^{2}$ . We have
\begin{equation}
\| \widehat{u}(\cdot,\xi'')\|_{0} \leq C e^{-\varepsilon|\xi''|}, \quad \xi''\in \mathbb{Z}^{2},
\end{equation}
where $C$ and $\varepsilon$ are suitable positive constants.\\
Let $\phi \in C^{\infty}_{0}(\mathcal{I})$ with $\phi \equiv 1$ in  $\mathcal{I}_{1}$,
$\mathcal{I}_{1}$ neighborhood of the origin compactly contained in $\mathcal{I}$.
Let $u_{1}(x',x'')=\phi(x')u(x',x'')$, we have
\begin{align*}
|\widehat{u}_{1}(\xi',\xi'')| =\Big |\int_{\mathbb{T}^{2}_{x'}}
e^{-i\langle x',\xi'\rangle} \widehat{u}_{1}(x',\xi'') \,\,dt\,\Big |
C_{1}\leq \| \widehat{u}(\cdot,\xi'')\|_{0} \leq C_{2} e^{-\varepsilon_{1}|\xi|},
\end{align*}
for every $(\xi',\xi'')\in \mathbb{Z}^{4}$ with $|\xi''|\neq 0$ and $|\xi'|< c |\xi''|$, $c >0$.
This shows that the points of the form 
$(x',x'', \xi',\xi'' ) \in T^{*}(\mathcal{I}\times \mathbb{T}^{2}_{x''})\setminus \lbrace 0\rbrace$
with $\xi''\neq 0$ and $|\xi'|< c |\xi''|$ do not belong to $WF_{a}(u)$, the analytic wave front set of 
$u$. Therefore there is no points in $\textit{Char}(P_{1})$, the characteristic variety of $P_{1}$,
which belong to $WF_{a}(u)$.
By the Theorem 8.6.1 in \cite{H80-book1} we conclude that the analytic wave front set of $u$ is 
empty.
\section{Proof of the theorem \ref{T1.1-G/L}}
\noindent
Since the vector fields $X_{1}, \,\dots, \, X_{6}$ satisfy the H\"ormander condition, \cite{H67},
$P_{1}$ is hypoelliptic. Furthermore with the aid of the partition of unity the operator $P_{1}$
satisfies the following subelliptic a priori estimate:
\begin{equation}\label{SubP1}
\| u\|_{\frac{1}{r}}^{2} + \sum_{j=1}^{6}\|X_{j}u\|^{2}
\leq C|\langle P_{1}u, u\rangle| +C^{N+1}\|u\|^{2}_{_{-N}},
\end{equation}
for every $N \in \mathbb{Z}_{+}$. Here $u$ is a smooth function on 
$\mathcal{I}\times \mathbb{T}_{x_{3}}\times \mathcal{U}$ with compact support with respect to 
$x_{1}$, $x_{2}$ and $x_{4}$. $\|\cdot\|_{s}$ denotes the Sobolev norm of order $s$ and $r$ the 
length of the iterated commutator such that the vector fields, their commutators, their triple 
commutators etcetera up to the commutators of length $r$ generate a Lie algebra of dimension
equal to that of the ambient space.  More precisely $r-1$ is the minimum between the zero order
at $0$ of $a$ and that at $0$ of $b_{1}$.
The above estimate was proved first by H\"ormander in \cite{H67} 
for a Sobolev norm of order $r^{-1}+\varepsilon$ and up to order $r^{-1}$ subsequently by
Rothschild and Stein \cite{RS}.\\
To achieve the result, we want show the analytic growth of high order derivatives of the solutions
in $L^{2}$-norm. As a matter of fact we estimate a suitable localization of a high derivative of the 
solutions using (\ref{SubP1}).\\
Let $\phi_{N}(x_{1},x_{2},x_{4})$ be a cutoff function of Ehrenpreis-H\"ormander type:
$\phi_{N}$ in $C^{\infty}_{0}\left(\mathcal{I}\times \mathcal{U}\right)$ non negative
such that $ \phi_{N} \equiv 1 $ on $\mathcal{U}_{0} $, $ \mathcal{U}_{0} $ neighborhood of the 
origin compactly contained in $\mathcal{I}\times \mathcal{U} $, and exist a constant $ C $ such that 
for every $  |\alpha|  \leq 2 N $ we have $ | D^{\alpha} \phi_{N}(x) | \leq C^{ \alpha +1} N^{\alpha  }$,
$\alpha \in \mathbb{Z}^{3}$.\\
We may assume that $ \phi_{N} $ is independent of  the $ x_{1} $ and $x_{2}$-variable:
every $ x_{1} $, $x_{2}$-derivative landing on $ \phi_{N} $ would leave a cut off function 
supported where $ x_{1} $ or  $x_{2}$ is bounded away from zero, where the operator is elliptic.
As in \cite{Tartakoff-96}, to gain the result we have to show the analytic growth of
 $\phi_{N} D_{j}^{N}u$, $j=1,\, 2,\, 3,\, 4$, via (\ref{SubP1}).
It will be sufficient analyze the direction $D_{4}$.
As matter of fact $D_{3}$ commutes with $P_{1}$ and, moreover, following the same strategy 
employed to analyze the case $D_{4}$, we can transform powers of $D_{1}$ and $D_{2}$ in powers 
of $D_{3}$ and $D_{4}$.\\
We replace $u$ in (\ref{SubP1}) by $\phi_{N} D_{4}^{N}u$. We have
\begin{align}\label{SubP1c}
\| \phi_{N} D_{4}^{N}u\|_{1/r}^{2} + \sum_{j=1}^{6}\|X_{j}\phi_{N} D_{4}^{N}u\|^{2}
\leq C \, |\langle   & P_{1}\phi_{N} D_{4}^{N}u, \phi_{N} D_{4}^{N}u\rangle|
\\
&\nonumber\qquad 
+C^{N+1}\|\phi_{N} D_{4}^{N}u\|_{_{-N}}.
\end{align} 
The last term on the right hand side gives analytic growth. The scalar product:
\begin{align*}
& \langle  \phi_{N}  D_{4}^{N} P_{1}  u, \phi_{N}  D_{4}^{N} u \rangle
+ 
\sum_{j=1}^{6} 
\langle \lbrack X_{j}^{2}, \phi_{N}  D_{4}^{N} \rbrack u, \phi_{N}  D_{4}^{N}u \rangle
\\
&\quad
= 2 \sum_{j=1}^{6}
 \langle  \lbrack X_{j}, \phi_{N}  D_{4}^{N} \rbrack u, X_{j}\phi_{N}  D_{4}^{N}u \rangle
+\sum_{j=1}^{6} 
 \langle \lbrack X_{j}, \lbrack X_{j}, \phi_{N}  D_{4}^{N} \rbrack \rbrack u, \phi_{N}  D_{4}^{N} u 
 \rangle
 \nonumber
\\
&\hspace*{23em}
+  \langle  \phi_{N} D_{4}^{N} P_{2}  u, \phi_{N} D_{4}^{N} u \rangle.
\nonumber
\end{align*}
With regard to the last scalar product on the right hand side we have
\begin{align*}
&|\langle  \phi_{N} D_{4}^{N} P_{1}  u, \phi_{N} D_{4}^{N} u \rangle|
\leq 
\left(\frac{1}{2C}\right)
 \|\phi_{N} D_{4}^{N} u\|^{2}_{\frac{1}{r}}
+\left(2C\right)^{rN}\|\varphi_{i}\phi_{N}D_{4}^{N}u\|^{2}_{-N} 
\\
&\hspace*{27em}+ \|\phi_{N} D_{4}^{N} P_{2}  u\|^{2}.
\end{align*}
The last two terms give analytic growth, $P_{1}u\in C^{\omega}$; the first one can be absorbed on 
the left hand side of (\ref{SubP1c}).\\
Since $\phi_{N}$ depend only by $x_{4}$ we have to analyze the commutators with,  $X_{3}$, and 
$X_{6}$. Since the same strategy can be used to handle the case involving $X_{3}$ and $X_{6}$, we 
will give the details only of the case $X_{3}$. We have
\begin{align}\label{es-X_3-P1}
&
2| \langle  \lbrack X_{3}, \phi_{N} D_{4}^{N} \rbrack u, X_{3}\phi_{N} D_{4}^{N}u \rangle|
+ 
| \langle \lbrack X_{3},\lbrack X_{3}, \phi_{N} D_{4}^{N} \rbrack \rbrack u, \phi_{N} D_{4}^{N} u 
\rangle| 
\\
&\nonumber\quad
= 2 | \langle a_{1} \phi^{(1)}_{N} D_{4}^{N} u, X_{3}\phi_{N} D_{4}^{N}u \rangle |
+ | \langle  a_{1}^{2}  \phi^{(2)}_{N} D_{4}^{N} u, \phi_{N} D_{4}^{N} u \rangle |.
\end{align}
The first term can be estimate by
\begin{align*}
| \langle a_{1} \phi^{(1)}_{N} D_{4}^{N} u, X_{3}\varphi_{N} D_{4}^{N}u \rangle |
\leq  \sum_{j=1}^{ N}  C_{j} \| X_{3} \phi^{(j)}_{N} D_{4}^{N - j} u \|^{2}
&+  \sum_{j=1}^{ N+1}\!\frac{1}{C_{j}} \| X_{3}\phi_{N} D_{4}^{N}u \|^{2}
\\
&\nonumber\qquad
+  C_{N+1}\| \phi^{(N+1)}_{N}  u \|^{2} ,
\end{align*}
The constants $ C_{j}$ are arbitrary, we make the choice $ C_{j} = \varepsilon^{-1} 2^{j} $,
$ \varepsilon $ suitable small positive constant. The terms of the form
$ C_{j}^{-1} \| X_{3} \phi_{N} D_{4}^{N} u\|^{2} $ can be absorbed on the right hand side of 
(\ref{SubP1c}). The last term gives analytic growth.
Finally we  observe that the terms in the first sum have the same form as 
$ \| X_{3} \phi_{N} D_{4}^{N } u \|^{2} $ where one or more $ x_{4} $-derivatives have been shifted 
from $ u $ to $ \phi_{N} $; on these terms we can take maximal advantage from the 
sub-elliptic estimate restarting the process. \\
With regard to the second term on the right hand side of  (\ref{es-X_3-P1}) we have
\begin{align*}
| \langle  a_{1}^{2}\phi^{(2)}_{N} D_{4}^{N} u, \phi_{N} D_{4}^{N} u \rangle |
&
\leq 
\frac{1}{2N^{2}} \| X_{3} \phi^{(2)}_{N} D_{4}^{N -1} u \|^{2} 
+ \frac{N^{2}}{2} \| X_{3} \phi_{N} D_{4}^{N-1} u \|^{2}   
\\
&\phantom{=}
+| \langle  a_{1}\phi^{(2)}_{N} D_{3}^{N-1} u, X_{3} \phi^{(1)}_{N} D_{4}^{N-1} u \rangle |
\\
&\phantom{=}
+ | \langle N^{-1} a_{1}  \phi^{(3)}_{N} D_{4}^{N-1} u, N X_{3} \phi_{N} D_{4}^{N-1} u \rangle |
\\
&\phantom{=}
+| \langle a_{1}^{2}\phi^{(3)}_{N} D_{4}^{N-1} u, \phi^{(1)}_{N} D_{4}^{N-1} u \rangle |.
\end{align*}
On the first two  terms we can take maximal advantage from the sub-elliptic estimate restarting the 
process. The `` weight'' $ N $ introduced above helps to balance the number of $ x_{4}$-derivatives 
on $ u $ with the number of derivatives on $ \phi_{N} $, we take the factor $ N $ as a derivative on
$ \phi_{N} $ and $ N^{-1} \phi^{(2)}_{N} $ as $ \phi^{(1)}_{N}$. 
The second and the third terms have the same form of the first term on the right hand side of
(\ref{es-X_3-P1}), the third one with the help of the weight $ N $; we can handled 
both in the same way. The last term is the same of the left hand side in which one $ x_{4}$-derivative
has been shifted from  $ u $ to $ \phi_{N} $ on both side. 
Restarting the process we can estimate the left hand side of the above inequality by
\begin{align*}
&\frac{1}{2N^{2}}\sum_{j=1}^{N}\|X_{3}\phi_{N}^{(j+1)}D^{N-j}_{4}u\|^{2}
+
\frac{N^{2}}{2}\sum_{j=1}^{N}\|X_{3}\phi_{N}^{(j-1)}D^{N-j}_{4}u\|^{2}
\\
&
+\sum_{j=1}^{N}\sum_{\ell=j}^{N} \|X_{3}\phi_{N}^{(N-\ell+j+1)}D^{\ell-j}_{4}u\|^{2}
+\sum_{j=1}^{N}2^{j+1}\|X_{3}\phi_{N}^{(N-j+1)}D^{j-1}_{4}u\|^{2}
\\
&
+\frac{1}{N^{2}}
\sum_{j=1}^{N}\sum_{\ell=j}^{N} \|X_{3}\phi_{N}^{(N-\ell+j+2)}D^{\ell-j}_{4}u\|^{2}
+N^{2}\sum_{j=1}^{N}C^{j+1}\|X_{3}\phi_{N}^{(N-j)}D^{j-1}_{4}u\|^{2}\\
&
+2^{N+1}\left( \|a_{1}\phi_{N}^{(N+1)}u\|^{2} +\|a_{1}\phi_{N}^{(N+2)}u\|^{2}\right)
+\|a_{1}\phi_{N}^{(N)}u\|^{2}. 
\end{align*}
The last terms give analytic growth, the others, in the sums, have the same form as 
$\|X_{3}\phi_{N}D^{N}_{4}u\|^{2}$, we can restart  the process without the help of the sub-ellipticity.  
\noindent
Therefore at any step of the process we obtain or terms which give analytic growth or terms from 
which we can take maximum advantage from the sub-elliptic estimate.
We can conclude 
\begin{align*}
\| \phi_{N} D_{4}^{N}u\|_{1/r}^{2} + \sum_{j=1}^{6}\|X_{j}\phi_{N} D_{4}^{N}u\|^{2}
\leq 
C^{N+1} (N)^{2N},
\end{align*} 
where $C$ is independent by $N$ but depends on $u$ and $a_{1}$. This conclude the proof.
%
\section{Proof of the theorem \ref{T2-Gl+G/L}}
\noindent
\paragraph{\textbf{Part (i), Theorem \ref{T2-Gl+G/L}}}
Without loss of generality we assume that $x_{1}^{0}=0$ and 
$\mathcal{I}_{1}\times\mathcal{I}_{2}$ is a neighborhood of the point $x'=(0,0)$.
Since the vector fields $X_{1}, \dots\, , X_{6}$ satisfy the H\"ormander condition
$P_{2}$ is hypoelliptic, it has the following sub-elliptic estimate:
\begin{align}\label{Sub-P2-1}
\| u\|_{1/r}^{2} + \sum_{j=1}^{6}\|X_{j}u\|^{2}
\leq C\left( |\langle P_{2}u, u\rangle| +\|u\|^{2}_{0}\right),
\end{align} 
where $u$ is a smooth function on 
$\mathcal{I}_{1}\times\mathcal{I}_{2} \times \mathbb{T}^{2}_{x''}$ with compact support with
respect to $x'$. Here $r-1$ is the zero order at $0$ of $a_{2}$.\\
\noindent
As in the proof of the Theorem \ref{T1.1-G/L} the result will be achieved via the $L^{2}$ estimate of 
suitable localizations of high derivatives of the solutions. 
Even if not strictly necessary in this situation we will follow a little bit different strategy which will 
involve the partition of unity, as done in \cite{Tartakoff-96}. This more general approach would allow 
us, without particular technical efforts, to extend the results to a more general setting in which the 
two-dimensional torus is replaced by a compact real analytic manifold, $M$, without boundary and the 
vector fields $D_{3}$ and $D_{4}$ are replaced by a couple of real analytic vector fields $X_{3} $ and 
$X_{4}$ on $M$ such that they span $TM$ at each point.\\
\noindent
Let $\phi_{N}(x_{2})$ be a cutoff function of Ehrenpreis-H\"ormander type. $\phi_{N}$ is taken
independent of the $x_{1}$-variable since every $ x_{1}$-derivative landing on $ \phi_{N} $ would 
leave a cut off function supported where $ x_{1} $ is bounded away from zero, where the operator
is elliptic.\\
\noindent
Let $\lbrace\mathcal{V}_{j}\rbrace$ be a finite covering of $\mathbb{T}^{2}_{x''}$, $j=1,\dots,k$, 
and $\lbrace \varphi_{j}\rbrace$ a partition of unity subordinate to to such a cover, 
$\varphi_{j}\in C^{\infty}_{0}(\mathcal{V}_{j})$, $\varphi_{j}\geq 0$ and 
$\sum \varphi_{j} =1$.\\
\noindent
We replace $ u $ in $(\ref{Sub-P2-1})$ by $\varphi_{j}(x_{3},x_{4})\phi_{N}(x_{2}) D_{2}^{N}u$.
We have
\begin{align}\label{Sb_P2-1.1}
&\|\varphi_{j}\phi_{N} D_{2}^{N} u\|^{2}_{\frac{1}{r}}
+ \sum_{i=1}^{6}\|X_{i}\varphi_{j}\phi_{N} D_{2}^{N}u\|^{2}_{0}
\leq C  |\langle P_{2}\varphi_{j}\phi_{N} D_{2}^{N} u, \varphi_{j}\phi_{N} D_{2}^{N}u\rangle |
\\
&
\hspace*{21em} + C^{N+1}\|\varphi_{j}\phi_{N}D_{2}^{N}u\|^{2}_{-N}.
\nonumber
\end{align}
The last term on the right hand side gives analytic growth. 
As done in the proof of the Theorem \ref{T1.1-G/L} we have to handle the scalar product on the right
hand side, more precisely we have to study terms of type
\begin{align*}
\langle [X_{i}, \varphi_{j}\phi_{N} D_{2}^{N}] u, X_{i}\varphi_{j}\phi_{N} D_{2}^{N} u\rangle,
\hspace*{0.5em}
\langle[X_{i}, [X_{i}, \varphi_{j}\phi_{N} D_{2}^{N}] ]u, \varphi_{j}\phi_{N} D_{2}^{N} u\rangle,
\end{align*}
$i=2,\dots, 6$. The case $X_{4}= a_{2}(x_{1})D_{2}$ can handled following the same strategy used 
in the proof of Theorem \ref{T1.1-G/L}, see (\ref{es-X_3-P1}), in this case we can take maximal 
advantage from the sub-elliptic estimate, therefore it gives analytic growth.  
Concerning the other cases it is sufficient study the case $X_{2} = a_{1}(x_{1})D_{3}$, 
the remaining cases can be handled following the same strategy
\footnote{We remark that the terms involving the fields $X_{5}$ and $X_{6}$ could be handled taking 
maximum advantage from the sub-elliptic estimate, this could be done choosing a partition of unity 
subordinate to the cover, whose elements are cutoff functions of Ehrenpreis-H\"ormander type.}.
 We have to estimate
\begin{align}\label{Est-X_2}
2|\langle a_{1} \varphi_{j}^{(1)}\phi_{N} D_{2}^{N}u, X_{2}\varphi_{j}\phi_{N} D_{2}^{N} u
\rangle|
+
|\langle a_{1}^{2} \varphi_{j}^{(2)} \phi_{N} D_{2}^{N}u, \varphi_{j}\phi_{N} D_{2}^{N} u\rangle|
\doteq I_{1}+I_{2},
\end{align}
where $\varphi_{j}^{(\ell)}=\partial_{3}^{^{\ell}}\varphi_{j}$. Here we can not take maximum 
advantage from the sub-elliptic estimate. In the local case would be this term which would give Gevrey 
growth. The argument that we will use to handle these two terms is the reason because the results is 
global and not local with respect to the $x_{3}$-variable.  We have
\begin{align}\label{Est-I_1}
I_{1}&\leq 4C \|a_{1}\varphi_{j}^{(1)} \phi_{N}D_{2}^{N}u\|^{2}
+ \frac{1}{4C}\|X_{2}\varphi_{j} \phi_{N}D_{2}^{N}u\|^{2} \\
&\nonumber
\leq 4C \|a_{1}\|_{\infty}^{2}\sup_{j} \|\varphi_{j}^{(1)}\|_{\infty}^{2}\|\phi_{N}D_{2}^{N}u\|^{2}
+\frac{1}{4C}\|X_{2}\varphi_{j} \phi_{N}D_{2}^{N}u\|^{2}\\
&\nonumber
\leq 4C  \|a_{1}\|_{\infty}^{2}\sup_{j} \|\varphi_{j}^{(1)}\|_{\infty}^{2} C_{1} 
\sum_{j=1}^{k} \|\varphi_{j} \phi_{N}D_{2}^{N}u\|^{2}_{1/r}
+ \frac{1}{4C}\|X_{2}\varphi_{j} \phi_{N}D_{2}^{N}u\|^{2} \\
&\nonumber
\hspace*{9em}
+ 4C   \|a_{1}\|_{\infty}^{2}\sup_{j} \|\varphi_{j}^{(1)}\|_{\infty}^{2} C_{1}^{-rN}
\sum_{j=1}^{k} \|\varphi_{j} \phi_{N}D_{2}^{N}u\|^{2}_{-N},
\end{align}
where the constant $C_{1}$ is arbitrary. The second term on the right hand side can be absorbed on 
the left hand side of (\ref{Sb_P2-1.1}), the last one gives analytic growth. The term $I_{2}$ in 
(\ref{Est-X_2}) can be estimate by
\begin{align}\label{Est-I_2}
\phantom{I_{2}}&
 \|a_{1}^{2}\|_{\infty}^{2}\sup_{i} \|\varphi_{i}^{(2)}\|_{\infty}^{2}\left( C_{2} 
\sum_{j=1}^{k} \|\varphi_{j} \phi_{N}D_{2}^{N}u\|^{2}_{1/r}
+ C_{2}^{-rN}
\sum_{i=1}^{k} \|\varphi_{j} \phi_{N}D_{2}^{N}u\|^{2}_{-N}\right)\\
&\nonumber
\hspace*{11em}
+\frac{1}{2C}  \|\varphi_{j} \phi_{N}D_{2}^{N}u\|^{2}_{1/r}
+  (2C)^{rN} \|\varphi_{j} \phi_{N}D_{2}^{N}u\|^{2}_{-N},
\end{align}
where the constant $C_{2}$ is arbitrary. The last term gives analytic growth and the second to last  
can be absorbed on the left hand side of (\ref{Sb_P2-1.1}).
Summing (\ref{Sb_P2-1.1}) over $j$ and choosing $C_{1}$ and $C_{2}$ small enough so that the first 
term in (\ref{Est-I_1}) and the first one in (\ref{Est-I_2}) can be absorbed on the left,
we can conclude 
\begin{align*}
\| \phi_{N} D_{2}^{N}u\|_{1/r}^{2} + \sum_{j=1}^{6}\|X_{j}\phi_{N} D_{2}^{N}u\|^{2}
\leq 
C^{N+1} (N)^{2N},
\end{align*} 
where $C$ is independent by $N$ but depends on $u$. This conclude the proof.
%
\noindent
\paragraph{\textbf{Part (ii), Theorem \ref{T2-Gl+G/L}}}
We can  assume that $(x_{1}^{0},x_{2}^{0})=(0,0)$ and $\mathcal{U}$ is
an open neighborhood of the zero. 
Since the vector fields satisfy the H\"ormander condition at the step $r$, for some 
$r \in \mathbb{Z}_{+}$,  the following a priori estimate holds:
\begin{equation}\label{Sub_P-2}
\|u\|^{2}_{\frac{1}{r}}+ \sum_{j=1}^{6}\|X_{j}u\|^{2}_{0}
\leq C \left( |\langle P_{2}u, u\rangle| + C^{N}\|u\|^{2}_{-N}\right),
\quad \forall\, N\in \mathbb{Z}_{+}.
\end{equation}
Here $u$ is a smooth function on 
$\mathcal{I}\times \mathbb{T}_{x_{3}}\times \mathcal{U}$ with compact support with respect to 
$x_{1}$, $x_{2}$ and $x_{4}$. 
The result is obtained via estimate of suitable localization of high derivatives, that is estimating
$\phi_{N}(x_{4})D_{4}^{N}u$ through (\ref{Sub_P-2}). We will not give the details since the proof can be 
easily archived following the same strategies used in the proofs of the Theorem \ref{T1.1-G/L} and
Theorem \ref{T2-Gl+G/L}-(i). We only remark that the cutoff function of Ehrenpreis-H\"ormander type,
$\phi_{N}$,  can be assumed independent of  the $ x_{1} $ and $x_{2}$-variable:
every $ x_{1} $-derivative landing on $ \phi_{N} $ would leave a cut off function supported where
$ x_{1} $ is bounded away from zero, where the operator is elliptic; every $ x_{2}$-derivative
landing on $ \phi_{N} $ would leave a cut off function supported where
$ x_{2} $ is bounded away from zero, in this region the operator $P_{2}$ behaves like
the operator $D_{1}^{2} +a^{2}_{2}(x_{1})\left(D_{2}^{2}+D_{3}^{2}+D_{4}^{2}\right)
+ a^{2}_{1}(x_{1})\left( D_{3}^{2}+ D_{4}^{2}\right)$, which is (micro-)locally analytic hypoelliptic,
therefore (semi-)globally analytic hypoelliptic.
%
\bibliographystyle{amsplain}

\end{document}